\newtheorem{lemma}{Lemma}[section]
\newtheorem{theorem}[lemma]{Theorem}
\newtheorem{corollary}[lemma]{Corollary}
\newtheorem{proposition}[lemma]{Proposition}
\theoremstyle{definition}
\newtheorem{definition}[lemma]{Definition}
\newtheorem{notation}[lemma]{Notation}
\newtheorem{question}[lemma]{Question}
\newtheorem*{question*}{Question}
\theoremstyle{remark}
\newtheorem{example}[lemma]{Example}
\newcommand{\TA}{\textup{TA}}
\newcommand{\GA}{\textup{GA}}
\newcommand{\BA}{\textup{BA}}
\newcommand{\Aff}{\textup{Aff}}
\newcommand{\sdeg}{\textup{sdeg}}
\newcommand{\Spec}{\textup{Spec}}
\newcommand{\C}{\mathbb{C}}
\newcommand{\Q}{\mathbb{Q}}
\newcommand{\A}{\mathbb{A}}
\newcommand{\GG}{\mathcal{G}}
\newcommand{\TT}{\mathcal{T}}
\newcommand{\LL}{\mathcal{L}}
\newcommand{\EE}{\mathcal{E}}
\newcommand{\BB}{\mathcal{B}}
\title[On the closure of the tame automorphism group]{On the closure of the tame automorphism group of affine three-space}
\subjclass[2010]{14R10}
\keywords{polynomial automorphisms, tame and wild automorphisms, ind-varieties}
\thanks{This research was supported in part by the ANR Grant BirPol ANR-11-JS01-004-01}
\author{\'Eric Edo}
\address{Universit\'e de la Nouvelle Cal\'edonie, Bat.~S, Campus de Nouville, BP R4 -- 98851 Noum\'ea CEDEX, Nouvelle Cal\'edonie.}
\email{eric.edo@univ-nc.nc}
\author{Pierre-Marie Poloni}
\address{Universit\"{a}t Basel, Mathematisches Institut, Rheinsprung $21$, CH-$4051$ Basel, Switzerland.}
\email{pierre-marie.poloni@unibas.ch}
\begin{document}

\begin{abstract} We provide explicit families of tame automorphisms of the complex affine three-space which degenerate to wild automorphisms. This shows that the tame subgroup of the group of polynomial automorphisms of $\C^3$ is not closed, when the latter is seen as an infinite dimensional algebraic group.
\end{abstract}

\maketitle

\section{Introduction}

In 1965 \cite{Sha66}, Shafarevich introduced the notions of infinite-dimensional varieties and infinite-dimensional algebraic groups, now usually called ind-varieties and ind-groups. His main motivation was to study the group $\GA_n(\C)$ of polynomial automorphisms of complex affine $n$-spaces $\A_{\C}^n=\C^n$, which he endowed with an ind-group structure. This new approach was fruitful, since it allows him to state many nice (and tempting) results in \cites{Sha66,Sha81, Sha95}. In the present paper, we are interested in one that claims
that the tame automorphisms form a dense subgroup $\TA_n(\C)$ of  $\GA_n(\C)$. Unfortunately, Shafarevich's proof  is based on another result -- namely that a closed subgroup $H$ of a connected ind-group $G$ with the same Lie algebra as $G$ is equal to $G$ --, which turns out
to be false, since Furter and Kraft constructed recently a counterexample to
that  statement in \cite{FK}. Therefore, we must reconsider the question of the density of the tame subgroup and ask it again.
\begin{question*} Is $\TA_n(\C)$ dense in $\GA_n(\C)$ in the topology of ind-varieties (for $n\geq3$)?
\end{question*}

Moreover,  Furter and Kraft also establish the following surprising result: the subgroup $\TA_2(\C[z])$ of tame automorphisms of $\C^3$ that fix the last coordinate is a closed subgroup of the group $\GA_2(\C[z])$ of polynomial automorphisms of $\C^3$ that fix the last coordinate. In light of this, and since there were no known examples of  non-tame
automorphism that belong to the closure of the tame subgroup, we may
even ask if the tame subgroup is closed in $\GA_n(\C)$.

\begin{question*} Is $\TA_n(\C)$ closed in $\GA_n(\C)$ in the topology of ind-varieties (for $n\geq3$)?
\end{question*}

We will show that it is not the case, when $n=3$ of course, since it is the only case where the existence of non-tame automorphisms is proved. We  indeed construct families of tame automorphisms of $\C^3$ of bounded degrees which degenerate to wild (i.e.~non-tame) automorphisms. In particular, we will prove that the automorphism $\varphi$ of $\C^3$ defined by
$$\varphi= \left(x+\frac{3}{4}z^2y(\frac{3}{2}y^2-4xz)+\frac{3}{8}z^5(\frac{3}{2}y^2-4xz)^2, y+z^3(\frac{3}{2}y^2-4xz), z\right)$$
is not tame but is in the closure of the tame subgroup of $\C^3$. More precisely,  $\varphi$ is the limit, when $t\to0$, of the tame automorphism $\sigma_t$ of $\A^3_{\C(t)}$, which is given by  $$\sigma_t=(x-\frac{3yz}{2t}+\frac{z^3}{2t^2},y-\frac{z^2}{t},z)\circ(x,y,z+t^3x^2-t^2y^{3})\circ(x+\frac{3yz}{2t}+\frac{z^3}{t^2},y+\frac{z^2}{t},z)$$
and which has all its coefficient in $\C[t]$.

This example  illustrates a new phenomenon concerning the \emph{length} of tame automorphisms. Recall that the length of a (tame) automorphism $\sigma$ of $\C^n$ is the minimum number of triangular automorphisms that occur in a writing of $\sigma$ as  composition of affine and triangular automorphisms. Recall also that Furter proved in~\cite{FurterLength} that the length of automorphisms of the affine plane is lower semicontinuous.
That means that an automorphism of $\C^2$ of length $l$ can not be obtained as the limit of automorphisms of length $<l$. In contrast, in the above example, $\varphi$ is an automorphism of $\C^3$ of infinite length (because it is non-tame), which is the limit of the family $\sigma_t$ of automorphisms of length 3.

\medskip

The article is organized as follow. We fix some notations and recall the definition of tame automorphisms in Section \ref{sec-notations}. In Section~\ref{sec-limits}, we recall why the group $\GA_n(\C)$ of polynomial automorphisms of $\C^n$ has the structure of an infinite-dimensional affine algebraic variety and study the subset of $\GA_3(\C)$ which consists of all ``limits'' of tame automorphisms. In particular, we show that the set of tame automorphisms of $\C^3$ of degree at most $d$ is a constructible set in $\GA_3(\C)$ for all $d\geq1$. Finally, we give concrete examples of non-tame automorphisms which belong to the closure of the tame subgroup in Section~\ref{sec-examples}.

\section{Tame automorphisms}\label{sec-notations}

Let $n\ge 1$ be an integer and $R$ be a   commutative algebra over a field $k$. We denote by $R^{[n]}$ the polynomial algebra in $n$ variables over $R$. A polynomial map of $\A^n_R=\A^n_k\times_{\Spec(k)}\Spec(R)$ is a map  $f$ from $\A^n_R$ to itself of the form $$f:(x_1,\ldots,x_n)\mapsto (f_1(x_1,\ldots,x_n),\ldots, f_n(x_1,\ldots,x_n))$$
where the $f_i$'s belong to the polynomial ring $R[x_1,\ldots,x_n]$. We will denote by $f=(f_1,\ldots,f_n)$ such a map and we define its degree by
$$\deg(f)=\max\{\deg(f_i)\mid i=1,\ldots,n\}.$$
We will also denote by $f^{\ast}$ the corresponding $R$-algebra endomorphism of $R[x_1,\ldots,x_n]$, which is given by $f^{\ast}(P(x_1\ldots,x_n))=P(f_1,\ldots,f_n)$ for every element $P\in R[x_1,\ldots,x_n]$. Notice in particular that
$f^{\ast}(x_i)=f_i$ for all $i$.

The composition of two polynomial maps $f$ and $g$ is simply defined by $g\circ f=(g_1(f_1,\ldots,f_n),\ldots,g_n(f_1,\ldots,f_n))$.  We denote by $\GA_n(R)$ the group of (algebraic) automorphisms of $\A^n_R$ over $\Spec(R)$. An element $f\in\GA_n(R)$
 is simply an invertible polynomial map from $\A^n_R$ to $\A^n_R$ whose inverse $f^{-1}$ is also a polynomial map. We denote by $$\Aff_n(R)=\{f\in\GA_n(R)\mid \deg(f)=1\}$$
the \textit{affine} subgroup of $\GA_n(R)$ and by
$$\BA_n(R)=\{(f_1,\ldots,f_n)\in\GA_n(R)\mid f_i\in R[x_i,\ldots,x_n]\textrm{ for all }i=1\ldots n\}$$
the subgroup of \emph{triangular} automorphisms.

The subgroup of \textit{tame} automorphisms of $\A^n_R$ is denoted by $\TA_n(R)$. It is the subgroup of $\GA_n(R)$ generated by $\Aff_n(R)$ and $\BA_n(R)$. An element of $\GA_n(R)\smallsetminus\TA_n(R)$ is called \emph{wild}. The Tame Generators Problem asks for the existence of such automorphisms in the case where $R=k$ is a field.
\begin{question}[Tame Generators Problem]\label{que:tgp}\rm
Does it hold that ${\rm GA}_n(k)=\TA_n(k)$?
\end{question}
When $n=1$, the answer is trivially yes.
When $n=2$, the answer is also positive by the famous Jung-van der Kulk's theorem (cf.~\cite{Jung,Kulk}), which asserts  moreover that, for any field $k$, the group $\GA_2(k)=\TA_2(k)$ is the  amalgamated free product of $\Aff_2(k)$ and $\BA_2(k)$ along their intersection.

Note that we can consider $\GA_2(k[z])$ as a subgroup of $\GA_3(k)$ via the map  sending an element $(f_1,f_2)\in\GA_2(k[z])$ onto the corresponding automorphism $(f_1,f_2,z)$ in $\GA_3(k)$, which fixes the last coordinate. Then, Shestakov and Umirbaev \cites{SU1,SU2} proved the following impressive result.
\begin{theorem}[Shestakov, Umirbaev, 2004]\label{thm:SU}
Let $k$ be a field of characteristic zero. Viewing $\GA_2(k[z])$ and $\TA_2(k[z])$ as subgroups of $\GA_3(k)$, we have
$$\GA_2(k[z])\cap\TA_3(k)={\rm TA}_2(k[z]).$$
\end{theorem}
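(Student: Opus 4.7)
The inclusion $\TA_2(k[z]) \subseteq \GA_2(k[z]) \cap \TA_3(k)$ is immediate, since any generator $(f_1,f_2)$ of $\TA_2(k[z])$ lying in $\Aff_2(k[z])$ or $\BA_2(k[z])$ extends to the element $(f_1,f_2,z)$ of $\Aff_3(k)$ or $\BA_3(k)$ respectively, and composition is preserved. The content of the theorem is the reverse inclusion, which I would attack by induction on $\deg(\sigma)$ for $\sigma = (f_1,f_2,z) \in \GA_2(k[z]) \cap \TA_3(k)$. The base case $\deg(\sigma)=1$ is trivial: an element of $\Aff_3(k)$ fixing $z$ automatically belongs to $\Aff_2(k[z])$.

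The inductive step is the heart of the matter and would rest on the reduction theory developed in \cite{SU1,SU2}. The central ingredient is the \emph{Shestakov-Umirbaev inequality}: for algebraically independent $p,q\in k[x,y,z]$ and any $g\in k[u,v]$, one bounds $\deg g(p,q)$ from below in terms of $\deg p$, $\deg q$ and the degree of a Jacobian-type invariant of $(p,q)$. The characteristic-zero assumption enters here through differential calculus. From this inequality one deduces the \emph{reduction theorem}: every non-affine $\tau\in\TA_3(k)$ of sufficiently large degree admits either an \emph{elementary reduction} (composition with a single triangular automorphism strictly decreases the degree) or one of four specific \emph{Shestakov-Umirbaev reductions}, whose combinatorial shape is very constrained.

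Applied to our $\sigma$, this provides a decomposition $\sigma = \rho\circ\sigma'$ (or $\sigma'\circ\rho$) with $\rho\in\Aff_3(k)\cup\BA_3(k)$ and $\deg(\sigma')<\deg(\sigma)$. The crucial additional step is to show that, because $\sigma$ fixes $z$, the reducing map $\rho$ can be chosen to fix $z$ as well, so that $\sigma'$ still lies in $\GA_2(k[z])\cap\TA_3(k)$ and has coefficients in $k[z]$. For elementary reductions this is direct: the leading forms of the two non-trivial coordinates of $\sigma$ satisfy an algebraic relation over $k[z]$ imposed by $\sigma^*(z)=z$, and $\rho$ inherits coefficients in $k[z]$ from this relation.

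The main obstacle is the same verification for the four Shestakov-Umirbaev reductions. Each of these is governed by rigid identities between the coordinates of the tame automorphism, involving Jacobians and leading forms with respect to certain weighted degree functions on $k[x,y,z]$. One must check, case by case, that appending the constraint $\sigma^*(z)=z$ to these identities forces the reducing $\rho$ to lie in $\BA_2(k[z])\cup\Aff_2(k[z])$. This combinatorial-algebraic bookkeeping is the core technical work; once carried out, the inductive hypothesis applies to $\sigma'\in\GA_2(k[z])\cap\TA_3(k)$ to yield $\sigma'\in\TA_2(k[z])$, and hence $\sigma=\rho\circ\sigma'\in\TA_2(k[z])$, completing the induction.
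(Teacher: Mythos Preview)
The paper does not prove this theorem; it is quoted from \cites{SU1,SU2} as a foundational input and no argument is given. There is therefore no ``paper's proof'' to compare your proposal against.

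On the merits of your sketch: the overall strategy---induction on degree via the Shestakov--Umirbaev reduction theory, showing that each reduction step for $\sigma\in\GA_2(k[z])\cap\TA_3(k)$ can be chosen to fix $z$---is indeed the route taken in \cite{SU2}. Two remarks are in order, however. First, a small slip in the easy direction: an element $(f_1,f_2)\in\Aff_2(k[z])$ does \emph{not} in general extend to an element of $\Aff_3(k)$, since its coefficients may have positive degree in $z$. The inclusion $\TA_2(k[z])\subset\TA_3(k)$ still holds, but one must first decompose elements of $\Aff_2(k[z])$ via elementary matrices over the Euclidean domain $k[z]$, each of which does extend to a tame automorphism of $\A^3_k$. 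Second, and more substantially, what you present for the hard direction is a plan rather than a proof: your sentence ``this combinatorial-algebraic bookkeeping is the core technical work'' is exactly right, and that work occupies the bulk of \cite{SU2}. The verification that the Shestakov--Umirbaev reductions are compatible with the constraint $\sigma^*(z)=z$ is genuinely delicate and cannot be absorbed into a paragraph-long summary.
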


This answers negatively the Tame Generators Problem in dimension $3$ in characteristic zero. For example, the famous Nagata automorphism $$(x-2y(y^2+zx)-z(y^{2}+zx)^2,y+z(y^2+zx),z)$$ is a wild automorphism of $\C^3$. Indeed, the amalgamated free product structure on $\GA_2(\C(z))$ gives an algorithm  to check if a given element in $\GA_2(\C[z])$ is in
$\TA_2(\C[z])$ or not (see \cite{JPF}), and it turns out that the first two components of Nagata's automorphism correspond to an automorphism in $\GA_2(\C[z])\smallsetminus\TA_2(\C[z])$. When $n\geq 4$, Question~\ref{que:tgp} is still open.

\section{Limits of tame automorphisms}\label{sec-limits}

We will now recall how one can see $\GA_n(\C)$ as an ind-variety (i.e.~infinite dimensional algebraic variety) and which topology we consider on it. To this purpose, it is convenient to work with a fixed integer $n\geq 2$ (for us, it will be $n=3$) and to let $\GG=\GA_n(\C)$. Then, we consider the filtration on $\GG$ given by the degree and we define for every $d\geq1$ the subset
$$\GG_{\leq d}=\{f\in\GG=\GA_n(\C)\mid \deg(f)\leq d\}$$
of polynomial automorphisms of degree at most $d$. More generally, a subset $S\subset \GG$ be given, we let
$$S_{\leq d}:=S\cap \GG_{\leq d}=\{f\in S \mid \deg(f)\leq d\}.$$

One can show (see e.g.~\cite{Furter}) that each $\GG_{\leq d}$ has the structure of an affine algebraic variety and  is closed in $\GG_{\leq d+1}$ in the Zariski topology. Therefore, $\GA_n(\C)=\GG=\bigcup_{d \geq 1}\GG_{\leq d}$ is an ind-variety in the sense of Shafarevich. As usual, we endow it with the ind-topology in which a subset $S\subset\GG$ is closed if and only if every subset $S_{\leq d}$ is closed in $\GG_{\leq d}$ in the Zariski topology.

In the present paper, we focus our attention on the subset of tame automorphisms of affine three-space, which we will denote by $\TT$ in the sequel. We would like to investigate which automorphisms of $\C^3$ can be obtain as limits of such tame automorphisms in the following sense.

\begin{definition}
Let $S\subset\GG$ be a subset and let $f\in\GG$. We say that $f$ is \emph{limit} of elements of $S$ if there exist a positive integer $d\geq1$ and a subset $U\subset S_{\leq d}$ which is locally closed in $\GG_{\leq d}$   such that $f\in\overline{U}$.

Following \cite{FK}, we say that a subset $S\subset\GG$ is \emph{weakly closed} if $S$ contains all limits of elements of $S$.
\end{definition}

Together with a valuative criterion due to Furter \cite{Furter}, Theorem~\ref{thm-SU-decomposition} allows us to make this definition of limit more concrete in the case of tame automorphisms of $\C^3$ (see Corollary~\ref{cor-valuatif} below). This result follows  from the theory of Shestakov and Umirbaev, which provides an algorithm to decompose every tame automorphism of $\C^3$ as a product of affine and triangular maps.

\begin{theorem}\label{thm-SU-decomposition}
For each $d\geq 1$, there exist positive integers $m=m(d)$ and $k=k(d)$, depending only on $d$, such that every tame automorphism $f\in\TT_{\leq d}$ of $\C^3$ of degree at most $d$ can be written as a composition $f=f_1\circ\ldots\circ f_m$, where each $f_i$ is either affine, or a triangular automorphism of $\C^3$ of degree at most $k$.
\end{theorem}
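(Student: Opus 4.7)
The plan is to derive the statement from the Shestakov--Umirbaev reduction algorithm of \cite{SU1,SU2}, which provides, for any non-affine tame automorphism $f$ of $\C^3$, an explicit procedure to \emph{reduce} $f$: one constructs a sequence $g_1,\ldots,g_s$ of elementary (affine or triangular) automorphisms such that $g_s\circ\cdots\circ g_1\circ f$ has strictly smaller degree than $f$. In the most refined form of the statement, $s$ is bounded by a small absolute constant (at most $4$), and the $g_i$ are built directly from the components $f_1,f_2,f_3$ of $f$ by a finite number of explicit polynomial operations (cancellation of leading forms, computation of the so-called parachutes, division of certain leading polynomials). In particular, the degrees of the $g_i$ are bounded by a function depending only on $\deg(f)$.

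Granting this, I would proceed by induction on $d$. The base case $d=1$ is trivial, since a tame automorphism of degree $1$ is affine. For $d\geq 2$, given $f\in\TT_{\leq d}$, apply the SU reduction to obtain $g_1,\ldots,g_s$ of degree at most some $k'(d)$ such that $f':=g_s\circ\cdots\circ g_1\circ f$ has degree strictly less than $d$. By the inductive hypothesis, $f'$ admits a decomposition into $m(d-1)$ elementary factors of degrees at most $k(d-1)$. Composing with the inverses $g_1^{-1},\ldots,g_s^{-1}$ (which remain affine, respectively triangular of the same degree) yields a decomposition of $f$ into at most $m(d-1)+s$ elementary factors of degrees at most $\max\{k(d-1),k'(d)\}$. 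Setting $m(d):=m(d-1)+4$ and $k(d):=\max\{k(d-1),k'(d)\}$, and padding with copies of the identity so as to reach exactly $m(d)$ factors if desired, completes the induction.

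The main obstacle is the assertion in the first paragraph that the elementary factors produced by the SU algorithm have degrees bounded in terms of $\deg(f)$. This is not explicitly stated in \cite{SU1,SU2}, which focus on the existence of wild automorphisms rather than on uniform bounds on the lengths and degrees of decompositions. Extracting the bound requires inspecting the construction of the elementary and SU reductions step by step, and verifying that every ingredient --- leading forms, parachutes, and the triangular automorphisms used to cancel them --- is obtained from the coefficients of $f$ by operations (derivatives, products, and divisions of polynomials of degree $\leq d$) whose output degree is bounded in terms of $\deg(f)$. Once this finiteness is confirmed, the existence of $k'(d)$, and hence of $m(d)$ and $k(d)$, follows automatically; no sharp estimate is needed for our purposes.
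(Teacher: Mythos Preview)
Your overall strategy is the same as the paper's, and it is the right one: iterate Shestakov--Umirbaev reductions until an affine automorphism is reached, then control separately the number of steps and the degree of each elementary factor. Two points, however, keep your proposal from being a proof.

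\medskip
\textbf{A minor issue with the induction.} A single SU reduction lowers the quantity $\sdeg(f)=\deg f_1+\deg f_2+\deg f_3$, not the maximum degree $\deg(f)$. So the sentence ``apply the SU reduction to obtain $g_1,\ldots,g_s$ such that $f'$ has degree strictly less than $d$'' is not justified: after one reduction the largest component may still have degree $d$. The paper sidesteps this by working directly with $\sdeg$: since $3\le\sdeg(f)\le 3d$ and each reduction decreases $\sdeg$ by at least $1$, at most $3d-3$ reductions (each using at most three elementary/affine factors) bring $f$ down to an affine map. This replaces your induction and immediately yields $m(d)$.

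\medskip
\textbf{The substantive gap.} You correctly identify that the entire difficulty lies in bounding the degrees of the elementary factors, and then defer this to ``inspecting the construction step by step''. But the elementary automorphism $e=(x_1,x_2,x_3-P(x_1,x_2))$ realizing a reduction is \emph{not} canonically produced from $f$ by derivatives, products and divisions of its coefficients; $P$ is any polynomial with $\deg(f_3-P(f_1,f_2))<\deg f_3$, and when the leading forms $\overline{f_1},\overline{f_2}$ are algebraically dependent there is no a priori reason its degree should be controlled by $d$. The paper closes this gap in a targeted way: if $\overline{f_1}\in\C[\overline{f_2}]$ (or vice versa), one \emph{replaces} the given reduction by a different elementary reduction of the form $(x_1-\lambda x_2^{\alpha},x_2,x_3)$ with $\alpha\le d$; otherwise the pair $(f_1,f_2)$ is $*$-reduced, and the inequality of \cite{SU1}, Theorem~3, applied to $\deg P(f_1,f_2)=\deg f_3\le d$ yields the explicit bound $\deg P\le 2d(d+1)$. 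Reductions of Type I--III are then handled by observing that the auxiliary affine and degree-$2$ elementary factors bring one back to a $*$-reduced situation, giving $\deg(e)\le 4d(2d+1)$. This argument --- replacing a possibly bad elementary reduction by a controlled one, and invoking the $*$-reduced estimate --- is the actual content of the theorem and is missing from your proposal.
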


\begin{proof}
The proof is based on Shestakov-Umirbaev theory of reduction of tame automorphisms of $\C^3$. These reductions involve affine and elementary automorphisms. Let us recall that an automorphism $f$ of $\C^n$ is called \emph{elementary automorphism} if it  is of the form $$f=(x_1,\ldots,x_{i-1},x_i+P,x_{i+1},\ldots,x_n)$$ for some $1\leq i\leq n$ and some polynomial $P\in\C[x_1,\ldots,\widehat{x_i},\ldots,x_n]$ which does not depend on the variable $x_i$. We denote by $\EE$ the set of elementary automorphisms of $\C^3$ and by $\EE_{\leq d}$ the subset of those which are of degree at most $d$. Remark that a triangular automorphism of $\C^3$ of degree $d$ is equal to the composition of one affine automorphism and two elements of $\EE_{\leq d}$.

In \cite{SU2} Shestakov and Umirbaev did not consider the usual degree for a polynomial automorphism $f$, but the one that is given by the sum of the degrees of the components of $f$. Let us denote it by $\sdeg(f)$. So, we let $\sdeg((f_1,f_2,f_3))=\deg(f_1)+\deg(f_2)+\deg(f_3)$ for all automorphism $f=(f_1,f_2,f_3)$ of $\C^3$. According to the theory developed in \cite{SU2}, for every non-affine tame automorphism $f$ of $\C^3$, one of the following properties holds:
\begin{enumerate}
\item $f$ admits an elementary reduction, i.e.~there exists an elementary automorphism $e\in\EE$ such that $\sdeg(e\circ f)<\sdeg(f)$.
\item $f$ admits a reduction of Type I or II and then, there exist  an elementary automorphism $e\in\EE$ and  an affine one $a\in\GG_{\leq 1}$ such that $\sdeg(e\circ a\circ f)<\sdeg(f)$.
\item $f$ admits a reduction of Type III and then, there exist  an elementary automorphism $e\in\EE$, an elementary automorphism $e_2\in\EE_{\leq 2}$ of degree 2 and  an affine automorphism $a\in\GG_{\leq 1}$ such that $\sdeg(e\circ e_2\circ a\circ f)<\sdeg(f)$.
\end{enumerate}
Actually, Shestakov and Umirbaev also considered another kind of reduction, called of Type IV. But Kuroda proved that this one never occurs (see \cite{Kuroda}). Thus, any tame automorphism $f\in\TT_{\leq d}$ can be reduced to an affine automorphism by using at most $3d-3$ reductions as above. So, to conclude the proof, it only remains to show that all elementary maps that appear in a decomposition of an element of $\TT_{\leq d}$ can be taken with degrees bounded by a number $k=k(d)$ which depends only on $d$.

Let us first consider elementary reductions. Let $f=(f_1,f_2,f_3)\in\TT_{\leq d}$ and suppose that there exists a polynomial $P\in\C[x,y]$ such that $\deg(f_3-P(f_1,f_2))<\deg(f_3)$. We denote by $\overline{a}$ the homogeneous part of highest degree of a polynomial $a\in\C[x_1,x_2,x_3]$.

If $\overline{f_1}$ and $\overline{f_2}$ are algebraically independent, then the equality $\overline{f_3}=\overline{P(f_1,f_2)}=\overline{P}(\overline{f_1},\overline{f_2})$ easily implies that $\deg(P)\leq\deg(f_3)\leq d$. So, let us assume that  $\overline{f_1}$ and $\overline{f_2}$ are algebraically dependent. Now, if $\overline{f_1}\in\C[\overline{f_2}]$, then $\overline{f_1}=\lambda\overline{f_2}^{\alpha}$ for some $\lambda\in\C^*$ and $1\leq \alpha\leq d$. In this case, instead of the elementary reduction $e\circ f$ of $f$ given by $e=(x_1,x_2,x_3-P(x_1,x_2))$, we can perform another elementary reduction, namely the one given $\widetilde{e}=(x_1-\lambda x_2^{\alpha},x_2,x_3)$.

Therefore, we can suppose that $\overline{f_1}$ and $\overline{f_2}$ are algebraically dependent and that $\overline{f_1}\notin\C[\overline{f_2}]$ and $\overline{f_2}\notin\C[\overline{f_1}]$. A pair $(f_1,f_2)$ of such polynomials is called $*$\emph{-reduced} in \cites{SU1,SU2}.  Without loss of generality, we can assume that $d_1=\deg(f_1)<d_2=\deg(f_2)\leq d$. Following \cite{SU1}, we also let $p=d_1\gcd(d_1,d_2)^{-1}$, $s=d_2\gcd(d_1,d_2)^{-1}$, $\deg_{x_2}(P(x_1,x_2))=pq+r$ and $\deg_{x_1}(P(x_1,x_2))=sq_1+r_1$, where $0\leq r< p$ and $0\leq r_1< s$. Then, Theorem 3 in \cite{SU1} gives the following inequalities.
$$\deg(P(f_1,f_2))\geq qN+d_2r\geq q \quad \text{and}\quad \deg(P(f_1,f_2))\geq q_1N+d_1r_1\geq q_1, $$
where $N:=d_1d_2\gcd(d_1,d_2)^{-1}-d_1-d_2+\deg[f_1,f_2]\geq2$.
Since $\deg(P(f_1,f_2))=\deg(f_3)\leq d$, we  obtain
$$\deg_{x_2}(P(x_1,x_2))=pq+r\leq d_1d+d_1\leq d(d+1)$$
and
$$\deg_{x_1}(P(x_1,x_2))=sq_1+r_1\leq d_2d+d_2\leq d(d+1).$$
Thus, $\deg(P)\leq 2d(d+1)$ is bounded by a constant depending only on $d$, as desired.

Finally, it remains to consider the case where  $f$ admits a reduction of Type I, II or III. Let us write $\widetilde{f}=(g_1,g_2,f_3):=a\circ f$ if f admits a reduction of Type I or II and  $\widetilde{f}=(g_1,g_2,f_3):=e_2\circ a\circ f$ if f admits a reduction of Type III.  Furthermore, it follows from the precise definitions in \cite{SU2} that $(g_1,g_2)$ is a $*$-reduced pair and that $\widetilde{f}$ admits an elementary reduction $e\circ \widetilde{f}$ such that $\sdeg(e\circ \widetilde{f})<\sdeg(f)$. By the previous discussion, we can conclude that $\deg(e)\leq4d(2d+1)$. This proves the theorem.
\end{proof}

Theorem \ref{thm-SU-decomposition} implies the following fact.

\begin{proposition}\label{prop-constructible}
The set $\TT_{\leq d}$ of tame automorphisms of $\C^3$ of degree at most $d$ is  a constructible subset (i.e.~a finite union of locally closed subsets) of $\GG$ for every $d\geq 1$.
\end{proposition}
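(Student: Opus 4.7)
The plan is to realize $\TT_{\leq d}$ as the image of a morphism of algebraic varieties, intersected with a closed subvariety, and then invoke Chevalley's theorem on images of morphisms. The essential input is Theorem~\ref{thm-SU-decomposition}, which provides integers $m = m(d)$ and $k = k(d)$ such that every element of $\TT_{\leq d}$ admits a factorization $f_1 \circ \cdots \circ f_m$ with each $f_i$ either affine or triangular of degree at most $k$.

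With this in hand, I would first form the set
\begin{equation*}
X := \Aff_3(\C) \cup \BA_3(\C)_{\leq k},
\end{equation*}
regarded inside $\GG_{\leq K}$ for $K := \max(1,k)$. Each of $\Aff_3(\C)$ and $\BA_3(\C)_{\leq k}$ is locally closed: both sit in a linear subspace of polynomial endomorphisms of degree $\leq K$ cut out by the shape of the components, with invertibility encoded by the nonvanishing of the Jacobian determinant, which is an open condition. Thus $X$, and consequently the product $X^m \subseteq (\GG_{\leq K})^m$, is constructible.

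Next, the composition map
\begin{equation*}
c \colon X^m \longrightarrow \GG_{\leq K^m}, \qquad (f_1, \ldots, f_m) \longmapsto f_1 \circ \cdots \circ f_m,
\end{equation*}
is a morphism of algebraic varieties, since composing polynomial maps of bounded degree is polynomial in the coefficients. Chevalley's theorem then shows that its image is a constructible subset of $\GG_{\leq K^m}$. Theorem~\ref{thm-SU-decomposition} gives $\TT_{\leq d} \subseteq c(X^m)$, while the inclusion $c(X^m) \cap \GG_{\leq d} \subseteq \TT_{\leq d}$ is immediate because every element of $c(X^m)$ is tame by construction. Hence
\begin{equation*}
\TT_{\leq d} = c(X^m) \cap \GG_{\leq d},
\end{equation*}
which is the intersection of a constructible set with the closed subvariety $\GG_{\leq d}$, and therefore constructible in $\GG_{\leq d}$.

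There is no genuine obstacle here beyond Theorem~\ref{thm-SU-decomposition} itself, which already absorbs the Shestakov--Umirbaev reduction theory (together with Kuroda's exclusion of Type~IV reductions). The remaining points, namely that composition is an algebraic morphism between appropriate bounded-degree strata and that $\GG_{\leq d}$ is closed in $\GG_{\leq K^m}$, are formal consequences of the ind-variety structure on $\GG$.
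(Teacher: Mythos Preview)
Your argument is correct and follows essentially the same route as the paper: use Theorem~\ref{thm-SU-decomposition} to bound the number and degree of factors, form the union $\Aff_3(\C)\cup\BB_{\leq k}$, apply Chevalley's theorem to the composition morphism $(\GG_{\leq k})^m\to\GG_{\leq k^m}$, and intersect with the closed set $\GG_{\leq d}$. The only cosmetic difference is that the paper observes this union is actually \emph{closed} in $\GG_{\leq k}$ (triangularity and affineness are closed conditions on automorphisms), whereas you argue only for local closedness via a Jacobian condition; but since Chevalley applies to constructible sets either way, this does not affect the proof.
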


\begin{proof}Let $d\geq1$ be fixed, and let $m$ and $k$ be the corresponding integers given by Theorem \ref{thm-SU-decomposition}.
Denote by $\BB_{\leq k}$ the set of triangular automorphisms of $\C^3$ of degree at most $k$. Remark that this set is closed in $\GG_{\leq k}$ and thus in $\GG$. Consequently, the set $S:=\Aff_3(\C)\cup\BB_{\leq k}$ is  closed in $\GG$. The proposition  follows then by Chevalley's theorem. Indeed, the composition-map $\gamma_m:(\GG_{\leq k})^m\to\GG_{\leq k^m}$  defined by $\gamma_m(f_1,f_2,\ldots,f_m)=f_1\circ f_2\circ\cdots\circ f_m$ is a morphism of algebraic varieties. Therefore, $\gamma_m(S^m)$ is a constructible set in  $\GG_{\leq k^m}$. Hence,  $\TT_{\leq d}=\gamma_m(S^m)\cap\GG_{\leq d}$ is  constructible too.
\end{proof}

As a corollary, we obtain the following description of the set $\LL$ of all limits of tame automorphisms of $\C^3$.

\begin{corollary}\label{cor-valuatif}
The set $\LL:=\bigcup_{d\geq 1}\overline{\TT_{\leq d}}$ is weakly closed.

Furthermore, an automorphism $f\in\GG$ belongs to $\LL$ if and only if there exists a family $\varphi_t=((\varphi_t)_1,(\varphi_t)_2,(\varphi_t)_3)$ indexed by $t\in\C$ such that
\begin{enumerate}
\item $(\varphi_t)_i\in\C[[t]][x_1,x_2,x_3]$ for all $1\leq i\leq 3$;
\item $\varphi_t$ defines a tame automorphism of $\A_{\C((t))}^3$;
\item $\varphi_0=f$.
\end{enumerate}
\end{corollary}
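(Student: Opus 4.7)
\emph{Weak closure.} My first step is to show that for every fixed $d$, the trace $\LL_{\leq d}=\LL\cap\GG_{\leq d}$ is Zariski-closed in $\GG_{\leq d}$. Indeed,
$$\LL_{\leq d}=\bigcup_{e\geq 1}\bigl(\overline{\TT_{\leq e}}\cap\GG_{\leq d}\bigr)$$
is an ascending union (in $e$) of closed subsets of the finite-dimensional, hence Noetherian, variety $\GG_{\leq d}$, so it stabilizes and is closed. Therefore any locally closed $U\subset\LL_{\leq d}$ satisfies $\overline{U}\subset\LL_{\leq d}\subset\LL$, which means $\LL$ is weakly closed.

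\emph{Direction $(\Leftarrow)$ of the valuative characterization.} Given a family $\varphi_t$ satisfying (1)--(3), I would let $d$ be the maximum of the degrees of its components. By (1), $\varphi_t$ is then a morphism $\Spec\C[[t]]\to\GG_{\leq d}$. The description of $\TT_{\leq d}$ obtained in Proposition~\ref{prop-constructible} uses only the composition morphism $\gamma_m$ and the $\C$-variety $\Aff_3(\C)\cup\BB_{\leq k}$, both of which behave well under base change to $\C((t))$. Hence condition (2) amounts to saying that the generic fiber $\varphi_\eta$ lies in $\overline{\TT_{\leq d}}(\C((t)))$. Since $\overline{\TT_{\leq d}}\subset\GG_{\leq d}$ is Zariski-closed, the morphism above factors through $\overline{\TT_{\leq d}}$, so $\varphi_0=f\in\overline{\TT_{\leq d}}\subset\LL$.

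\emph{Direction $(\Rightarrow)$.} Conversely, assume $f\in\LL$ and choose $d$ such that $f\in\overline{\TT_{\leq d}}$. I would pick an irreducible component $V$ of $\overline{\TT_{\leq d}}$ containing $f$. Since $\TT_{\leq d}$ is constructible in $\GG$ and dense in $\overline{\TT_{\leq d}}$, the set $U:=V\cap\TT_{\leq d}$ contains a dense open subset of $V$. Applying Furter's valuative criterion from~\cite{Furter} then yields a morphism $\Spec\C[[t]]\to V$ whose closed point maps to $f$ and whose generic point lies in $U$. Unwinding this morphism gives a family $\varphi_t$ with coefficients in $\C[[t]]$ satisfying (3), and its generic fiber lying in $U\subset\TT_{\leq d}$ gives (2) by the same base-change compatibility.

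\emph{Main obstacle.} The only genuinely non-formal ingredient is Furter's valuative criterion, which is precisely what is needed to realize an abstract point of the closure of a constructible set as the special value of a concrete $\Spec\C[[t]]$-arc lying generically in that set. A more subtle conceptual point, used in both directions of the equivalence, is the field-theoretic robustness of Proposition~\ref{prop-constructible}: the constructible description it provides still describes the tame degree-$\leq d$ automorphisms over any characteristic-zero extension of $\C$, because Theorem~\ref{thm-SU-decomposition} depends only on Shestakov-Umirbaev reduction theory, which is insensitive to the particular characteristic-zero base field.
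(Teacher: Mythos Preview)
Your argument for weak closure contains a genuine error. You claim that an ascending union of closed subsets of a Noetherian variety stabilizes, but Noetherianity is the \emph{descending} chain condition on closed sets (equivalently, the ascending chain condition on \emph{open} sets). Ascending chains of closed sets need not stabilize: already in $\A^1_\C$ the chain $\{1\}\subset\{1,2\}\subset\{1,2,3\}\subset\cdots$ is a counterexample. In fact, your argument would prove that every $\LL_{\leq d}$ is closed in $\GG_{\leq d}$, i.e.\ that $\LL$ is closed in $\GG$; this is precisely the open Question posed just after the corollary, so the fact that you appear to settle it should have been a warning sign.

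The paper's proof avoids this trap by exploiting the \emph{constructibility} of $\TT_{\leq k}$ (Proposition~\ref{prop-constructible}) rather than mere closedness of the $\overline{\TT_{\leq k}}$. Given a locally closed $U\subset\LL_{\leq d}$, one writes $U$ as a countable increasing union of constructible sets $S_k=\bigcup_i\overline{U_{k,i}}\cap U$ inside the variety $\GG_{\leq d}$, and then invokes a Baire-type result (Lemma~2.5.4 of \cite{FK}) saying that a constructible set cannot be a countable increasing union of constructible sets without the union stabilizing. The crucial point is that $U$ itself is constructible, whereas $\LL_{\leq d}$ is not known to be. This yields $U\subset\overline{\TT_{\leq k_0}}$ for some $k_0$, hence $\overline U\subset\LL$, which is exactly weak closure and no more.

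Your treatment of the valuative characterization is essentially in line with the paper's (which simply cites Furter's criterion), and your remark about the base-change robustness of the Shestakov--Umirbaev description is a valid and useful observation.
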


\begin{proof}
By Proposition~\ref{prop-constructible}, we can write every set $\TT_{\leq k}$ as a finite union $\TT_{\leq k}=\bigcup_{i=1}^{n(k)}U_{k,i}$ of locally closed sets. In particular, the equality $\overline{\TT_{\leq k}}=\bigcup_{i=1}^{n(k)}\overline{U_{k,i}}$ holds for every $k\geq 1$. Now, let $U\subset \LL_{\leq d}$ be locally closed in  $\GG_{\leq d}$. Remark that
$$U=\bigcup_{k\geq 1}\overline{\TT_{\leq k}}\cap U=\bigcup_{k\geq 1}\left(\bigcup_{i=1}^{n(k)}\overline{U_{k,i}}\cap U\right)=\bigcup_{k\geq 1}S_k$$
where all $S_k:=\bigcup_{i=1}^{n(k)}\overline{U_{k,i}}\cap U$ are constructible subsets of $\GG_{\leq d}$. This implies (see e.g.~Lemma 2.5.4 in \cite{FK}) that there exists a $k_0\geq 1$ such that $U=\bigcup_{k=1}^{k_0}S_k$. Thus, $U\subset\overline{\TT_{\leq k_0}}$ and so $\overline{U}\subset\overline{\TT_{\leq k_0}}\subset\LL$. This proves that $\LL$ is weakly closed.

The second assertion of the corollary is a direct application of a valuative criterion due to Furter \cite{Furter}.
\end{proof}

To sum up, we have the three following inclusions involving $\TT$, $\LL$ and $\GG$.
$$\TT\subset\LL\subset\overline{\TT}\subset\GG.$$
The next section  is devoted to the proof that the first inclusion  is a strict one, i.e.~$\TT\varsubsetneqq\LL$. Since we do not know whether the two others are strict or not, we would like to ask two natural questions.

\begin{question}
Is $\LL$ closed in $\GG$? In other words, does the equality $\overline{\TT}=\LL$ hold?
\end{question}

\begin{question}
Is $\TT$ dense in $\GG$?
\end{question}

Note that these two questions are independent. Of course, $\LL$ could be closed and  not equal to the whole $\GG$. But, more surprisingly, even if $\TT$ would be dense in $\GG$, there may be some automorphisms of $\C^3$ which do not belong to $\LL$, i.e~ which we can not obtain as limits of tame automorphisms of bounded degree.

Finally, it is worth mentioning that $\LL$ and $\overline{\TT}$ are  subgroups of $\GG$.  Indeed, let us  recall that, as shown by Shafarevich in \cite{Sha81}, $\GG$ is  an infinite-dimensional algebraic group (ind-group for short). That means that the multiplication map $\mu:\GG\times\GG\to\GG$ and the inverse map $\iota:\GG\to\GG$ are morphisms of ind-varieties,  where a map $\varphi:X=\bigcup_{d}X_{\leq d}\to Y=\bigcup_{d}Y_{\leq d}$ between two ind-varieties is called \emph{morphism}, if for any $n\geq1$ there is an $m\geq1$ such that $\varphi(X_{\leq n})\subset Y_{\leq m}$ and $\varphi|_{X_{\leq n}}\to Y_{\leq m}$ is a morphism of algebraic varieties. With exactly the same arguments as for the classical case of algebraic groups (see e.g. Section 7.4 in~\cite{humphreys}), one checks that the following result remains true in the context of ind-groups.

\begin{lemma}\label{lemma-ind-subgroups}
Let $G$ be a ind-group and let $H\subset G$ be a subgroup. Then, the closure $\overline{H}$ of $H$ is a subgroup of $G$.
\end{lemma}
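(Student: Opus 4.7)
The plan is to adapt the classical proof for algebraic groups (as presented in Humphreys, Section 7.4) to the ind-group setting, using only that the multiplication $\mu$ and inverse $\iota$ are morphisms of ind-varieties, hence continuous with respect to the ind-topology.

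First I would establish the topological ingredients. For any fixed $g\in G$, the left translation $L_g:G\to G$, $x\mapsto gx$, is the composition of the closed immersion $x\mapsto (g,x)$ with $\mu$, so it is a morphism of ind-varieties, and in particular continuous. Its set-theoretic inverse is $L_{g^{-1}}$, which is continuous by the same argument, so $L_g$ is a homeomorphism of the underlying ind-topological space. The same holds for right translation $R_g$, and $\iota$ is a morphism that is its own inverse, hence also a homeomorphism.

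Next I would close $\overline{H}$ under products in two steps. For any $h\in H$, since $H$ is a subgroup we have $L_h(H)=H\subset\overline{H}$; continuity of $L_h$ then gives $L_h(\overline{H})\subset\overline{L_h(H)}\subset\overline{H}$. Thus $h\cdot\overline{H}\subset\overline{H}$ for every $h\in H$. Now fix $g\in\overline{H}$ and consider $R_g$. The previous inclusion (applied with the roles of left/right interchanged, or equivalently by noting $hg=L_h(g)\in\overline{H}$) shows that $R_g(H)\subset\overline{H}$. By continuity of $R_g$ we get $R_g(\overline{H})\subset\overline{R_g(H)}\subset\overline{H}$, i.e.\ $g_1 g_2\in\overline{H}$ for all $g_1,g_2\in\overline{H}$. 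Stability under inverses is immediate: $\iota(H)=H$, so continuity of $\iota$ yields $\iota(\overline{H})\subset\overline{\iota(H)}=\overline{H}$; applying the same to $\iota^{-1}=\iota$ gives equality, and in any case the inclusion suffices. Finally $1\in H\subset\overline{H}$, so $\overline{H}$ is a subgroup.

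The only point that deserves any care, and which I would expect to be the main (minor) obstacle, is the verification that translations and inversion are continuous in the ind-topology. This reduces to the definition of a morphism of ind-varieties recalled just before the lemma: for each $n$, one must exhibit an $m$ such that $L_g(\GG_{\leq n})\subset\GG_{\leq m}$ (for instance $m=n\cdot\deg g$) and that the restriction is a morphism of ordinary algebraic varieties, which follows from the fact that composing with a fixed polynomial automorphism $g$ is given by polynomial formulas in the coefficients. Once this is granted, the argument above is purely formal and identical to the classical one.
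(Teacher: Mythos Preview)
Your proposal is correct and takes exactly the approach the paper intends: the paper does not give a proof at all but simply remarks that ``with exactly the same arguments as for the classical case of algebraic groups (see e.g.\ Section~7.4 in~\cite{humphreys}), one checks that the following result remains true in the context of ind-groups,'' and your write-up is precisely that verification. Your care in checking that translations and inversion are continuous for the ind-topology is appropriate, and the two-step closure argument via $L_h$ then $R_g$ is the standard Humphreys proof.
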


\begin{corollary}
The sets $\LL$ and $\overline{\TT}$ are subgroups of $\GA_3(\C)$.
\end{corollary}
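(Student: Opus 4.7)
The statement has two parts. The claim that $\overline{\TT}$ is a subgroup will be a one-line appeal to Lemma~\ref{lemma-ind-subgroups}: by its very definition, $\TT$ is the subgroup of $\GG=\GA_3(\C)$ generated by $\Aff_3(\C)$ and $\BA_3(\C)$, so the lemma applies verbatim. The real content is the assertion for $\LL$, and for this my plan is to work through the filtration $\LL=\bigcup_{d\geq 1}\overline{\TT_{\leq d}}$ and exploit the fact, recalled just before the corollary, that the multiplication $\mu\colon\GG\times\GG\to\GG$ and the inverse $\iota\colon\GG\to\GG$ are morphisms of ind-varieties.

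For closure under composition, I will take $f\in\overline{\TT_{\leq d_1}}$ and $g\in\overline{\TT_{\leq d_2}}$. Since the composition of two polynomial maps of degrees $\leq d_1$ and $\leq d_2$ has degree $\leq d_1 d_2$, the map $\mu$ restricts to a morphism of algebraic varieties $\mu\colon\GG_{\leq d_1}\times\GG_{\leq d_2}\to\GG_{\leq d_1 d_2}$. As tameness is preserved under composition, this morphism sends $\TT_{\leq d_1}\times\TT_{\leq d_2}$ into $\TT_{\leq d_1 d_2}$. Using the standard fact $\overline{A\times B}=\overline{A}\times\overline{B}$ in the Zariski topology on a product of algebraic varieties (an easy two-step slicing argument), continuity of $\mu$ will then yield
$$f\circ g=\mu(f,g)\in\mu\bigl(\overline{\TT_{\leq d_1}}\times\overline{\TT_{\leq d_2}}\bigr)\subset\overline{\mu(\TT_{\leq d_1}\times\TT_{\leq d_2})}\subset\overline{\TT_{\leq d_1 d_2}}\subset\LL.$$

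For closure under inversion, I will apply the ind-morphism property of $\iota$ to obtain an integer $m=m(d)$ such that $\iota$ restricts to a morphism of algebraic varieties $\GG_{\leq d}\to\GG_{\leq m}$. Since the inverse of a tame automorphism is tame, $\iota(\TT_{\leq d})\subset\TT_{\leq m}$, and continuity then gives $\iota(\overline{\TT_{\leq d}})\subset\overline{\TT_{\leq m}}\subset\LL$. I do not expect a genuine obstacle: the whole argument is an unpacking of the ind-group structure, and the only care needed is tracking the degree bounds so that every closure is taken inside a finite-dimensional Zariski-closed piece of $\GG$, where ordinary algebraic geometry applies.
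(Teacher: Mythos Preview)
Your proof is correct. For $\overline{\TT}$ and for closure of $\LL$ under inversion you argue exactly as the paper does (the paper quotes Gabber's explicit bound $m(d)=d^2$ from \cite{BCW}, but your abstract appeal to the ind-morphism property of $\iota$ is enough). The one genuine difference is in the composition step: the paper simply invokes Corollary~\ref{cor-valuatif}, i.e.\ the formal-curve description of $\LL$, to see that $\LL$ is closed under composition, whereas you bypass this and argue directly via the degree bound $\deg(f\circ g)\leq\deg(f)\deg(g)$, the continuity of $\mu\colon\GG_{\leq d_1}\times\GG_{\leq d_2}\to\GG_{\leq d_1 d_2}$, and the identity $\overline{A\times B}=\overline{A}\times\overline{B}$. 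Your route is slightly more elementary in that it does not rely on the valuative criterion; the paper's route is shorter given that Corollary~\ref{cor-valuatif} is already available.
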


\begin{proof}
By Lemma~\ref{lemma-ind-subgroups}, $\overline{\TT}$ is a  subgroup of $\GG$. On one hand, it follows from  Corollary~\ref{cor-valuatif} that $\LL$ is closed under composition. On the other hand, by a result due to Gabber (see Corollary 1.4 in~\cite{BCW}), we have $\iota(\TT_{\leq d})\subset \TT_{\leq d^2}$ for all $d\geq1$, where $\iota:\GG\to\GG$ denotes the inversion map. Therefore, $\iota(\overline{\TT_{\leq d}})\subset\overline{\TT_{\leq d^2}}$ for all $d\geq1$. This is a consequence of the fact that the inversion map is a morphism of ind-groups and of the following elementary topological argument: if $f:X\to Y$ is a continuous map between two topological spaces and if $f(A)\subset B$ for some subsets $A\subset X$ and $B\subset Y$, then  $f(\overline{A})\subset\overline{B}$. Hence, $\LL$ is closed under inversion.
\end{proof}

\section{Examples of wild limits}\label{sec-examples}

\begin{notation}
Let $n,m\geq1$ be positive integers. We let $\Delta\in\C[x,y,z]$ be given by $\Delta=zx+y^{m+1}$ and we consider the derivation $\delta$ of $\C[x,y,z]$ defined by
$$\delta=\Delta\left(z^n\frac{\partial}{\partial y}-(m+1)y^mz^{n-1}\frac{\partial}{\partial x}\right).$$
\end{notation}

Since $\Delta$ is in the kernel of the triangular derivation $z^n\frac{\partial}{\partial y}-(m+1)y^mz^{n-1}\frac{\partial}{\partial x}$, $\delta$ is a locally nilpotent derivation of $\C[x,y,z]$. Therefore, the exponential map  associated to $\delta$ is a polynomial automorphism of $\C^3$. Actually, it is an element of $\GA_2(\C[z])$ and we have
$$\varphi_{\lambda}=\exp(\lambda\delta)=(x-\sum_{k=1}^{m+1}{m+1\choose k}\lambda^k\Delta^ky^{m+1-k}z^{nk-1},y+\lambda\Delta z^n,z)$$
for all $\lambda\in\C$. Moreover, it follows from the theory developed by Shestakov and Umirbaev, and improved by Kuroda, that $\varphi_{\lambda}$ is a wild automorphism of $\C^3$, if $\lambda\neq0$ (see e.g. Theorem 2.3. in~\cite{Kuroda-wildness}).

We can now state the main result of our paper as follows.

\begin{theorem}\label{main-thm}
If $n=2m+1$, then
$$\varphi_{\lambda}=\exp(\lambda\delta)=(x-\sum_{k=1}^{m+1}{m+1\choose k}\lambda^k\Delta^ky^{m+1-k}z^{nk-1},y+\lambda\Delta z^n,z)$$
is, for all $\lambda\neq0$,  a wild automorphism of $\C^3$, which belongs to the closure of $\TA_3(\C)$.
\end{theorem}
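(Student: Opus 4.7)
The wildness of $\varphi_\lambda$ for $\lambda\neq 0$ has already been pointed out in the paragraph preceding the statement, via Kuroda's criterion (see~\cite{Kuroda-wildness}). The real content of the theorem is therefore to show that $\varphi_\lambda$ lies in $\overline{\TA_3(\C)}$. By Corollary~\ref{cor-valuatif}, this reduces to producing a one-parameter family $\sigma_t=((\sigma_t)_1,(\sigma_t)_2,(\sigma_t)_3)$ with $(\sigma_t)_i\in\C[[t]][x,y,z]$ such that $\sigma_t\in\TA_3(\C((t)))$ and $\sigma_t|_{t=0}=\varphi_\lambda$.

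Guided by the explicit three-factor decomposition displayed in the introduction for the case $(m,n)=(1,3)$, I would build $\sigma_t$ by the same conjugation pattern
$$\sigma_t\;=\;\tau_t^{-1}\circ\eta_t\circ\tau_t,$$
where $\tau_t$ is a triangular automorphism of $\A^3_{\C(t)}$ whose entries involve negative powers of $t$, and $\eta_t$ is an elementary automorphism modifying only the $z$-slot by a binomial in $x$ and $y$ scaled by positive powers of $t$. A natural ansatz, generalizing the explicit example, is
$$\tau_t=\bigl(x+\tfrac{P(y,z)}{t}+\tfrac{Q(z)}{t^2},\; y+\tfrac{Cz^{n-1}}{t},\; z\bigr),\qquad\eta_t=\bigl(x,\,y,\,z+\alpha t^{a}x^{m+1}+\beta t^{b}y^{n}\bigr),$$
with $P\in\C[y,z]$ linear in $y$, $Q\in\C[z]$ a monomial, constants $C,\alpha,\beta\in\C$, and positive integers $a,b$ depending on $m$ and $n$. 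Each of $\tau_t,\tau_t^{-1},\eta_t$ is triangular or elementary, so $\sigma_t\in\TA_3(\C((t)))$ by construction.

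The verification then splits in two parts: (i) after composition, all negative powers of $t$ cancel in $\sigma_t$, so that in fact $\sigma_t\in\GA_3(\C[t])$; (ii) its value at $t=0$ equals $\varphi_\lambda=\exp(\lambda\delta)$. Both are carried out by substituting the three factors on the generators $x,y,z$ and expanding the powers $(y+Cz^{n-1}/t)^{n}$ and $(x+P/t+Q/t^{2})^{m+1}$ by the binomial theorem, then collecting by powers of $t$. The coefficients $C,\alpha,\beta$, the polynomials $P,Q$, and the exponents $a,b$ are tuned so that the $t^{-k}$-components (for $k\geq 1$) all vanish and the $t^{0}$-component recovers the formulas for $\varphi_\lambda$.

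The main obstacle is the bookkeeping: a priori the pole order of $\sigma_t$ can reach $t^{-n}$, coming from $(\tau_t^{-1})^{\ast}$ acting on the $y^{n}$-term produced by $\eta_t$, and all such negative-$t$ monomials must telescope. This is where the hypothesis $n=2m+1$ enters decisively: it is exactly the degree-balance between the two monomials $z^{n}$ and $(m+1)y^{m}z^{n-1}$ appearing in $D$ that allows a single conjugation by $\tau_t$ to absorb the poles from $t^{a}x^{m+1}$ and $t^{b}y^{n}$ simultaneously. Once the correct exponents $a=n$, $b=m+1$ and the scaling of $P,Q,C$ proportional to $\lambda$ are pinned down, reading off the $t^{0}$-part of the $y$-slot yields $y+\lambda\Delta z^{n}$, while the $t^{0}$-part of the $x$-slot produces, via the binomial identity, the full expansion $x-\sum_{k=1}^{m+1}\binom{m+1}{k}\lambda^{k}\Delta^{k}y^{m+1-k}z^{nk-1}$ of $\varphi_\lambda$, completing the proof.
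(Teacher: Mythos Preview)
Your outline captures the right \emph{shape} of the argument --- conjugate a $z$-slot elementary map by a triangular map carrying negative powers of $t$ --- and for $m=1$ your ansatz coincides exactly with the paper's construction. The gap is that the specific ansatz you write down does not survive for $m\ge 2$, and the missing ingredient is not a matter of tuning constants.

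Concretely, for general $m$ the paper's conjugating map is
\[
G_t=\Bigl(x+\tfrac{z^{2m+1}}{t^{m+1}}P_m\bigl(\tfrac{ty}{z^2}\bigr),\;y+\tfrac{z^2}{t},\;z\Bigr),
\qquad
P_m(U)=\sum_{k=0}^m\binom{m+\tfrac12}{k}U^k,
\]
and the elementary map is $F_t=(x,y,z+t^{m+1}(tx^2-y^{2m+1}))$. Three features here conflict with your template: the $y$-slot of $G_t$ is $y+z^2/t$ (not $y+Cz^{\,2m}/t$), the $x$-slot of $G_t$ carries a pole of order $m{+}1$ and is a degree-$m$ polynomial in $y$ (not order~$2$ and linear in $y$), and the $x$-exponent in $F_t$ is $2$ (not $m{+}1$). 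The heart of the cancellation is the identity
\[
P_m(U)^2\equiv (1+U)^{2m+1}\pmod{U^{m+1}},
\]
which comes from $P_m$ being the order-$m$ truncation of $(1+U)^{m+1/2}$. With $T=ty/z^2$ this is precisely what kills the pole $t^{-m}\bigl(P_m(T)^2-(1+T)^{2m+1}\bigr)$ produced in the $z$-component of $G_t^{-1}F_tG_t$. No choice of a single linear $P(y,z)$ and monomial $Q(z)$ can replicate this for $m\ge 2$: you need the whole truncated square-root, and this is the idea your sketch is missing. Your claim that ``the coefficients are tuned so that the $t^{-k}$-components vanish'' therefore fails as stated.

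A second, smaller point: even once the correct $G_t,F_t$ are used, the limit $\widetilde{\sigma}=\sigma_0$ is \emph{not} literally $\varphi_\lambda$; its second component is $y-4z^{2m+1}\bigl(xz-\binom{m+1/2}{m+1}y^{m+1}\bigr)$. The paper closes the argument by conjugating with a diagonal affine map to normalize the second and third components, and then composing with a further triangular $(dx+P(y,z),y,z)$ to fix the first. So the final step ``the $t^0$-part equals $\varphi_\lambda$ directly'' also needs to be replaced by this affine/triangular adjustment.
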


As a corollary, we thus obtain the following result.

\begin{corollary}
The tame automorphism subgroup is not (weakly) closed in $\GA_3(\C)$.
\end{corollary}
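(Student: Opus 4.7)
Since the wildness of $\varphi_{\lambda}$ for $\lambda\neq 0$ has already been noted above as a consequence of the Shestakov--Umirbaev--Kuroda theory (Theorem~2.3 of \cite{Kuroda-wildness}), the only content left to prove is that $\varphi_{\lambda}$ lies in $\overline{\TT}$. By the valuative reformulation in Corollary~\ref{cor-valuatif}, it suffices to exhibit a one-parameter family $\sigma_t$ of tame automorphisms of $\A^3_{\C((t))}$, with coordinate functions in $\C[[t]][x,y,z]$, specializing at $t=0$ to $\varphi_{\lambda}$.

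My plan is to generalize the three-factor construction displayed in the introduction (which handles the case $m=1$). I will set $\sigma_t:=C_t\circ B_t\circ A_t$, where $A_t$ and $C_t$ are triangular automorphisms whose $y$-component is of the form $y\pm z^{m+1}/t$ and whose $x$-component is a carefully chosen polynomial in $y,z$ divided by positive powers of $t$, while $B_t=(x,y,z+t^{n}x^{2}-t^{n-1}y^{m+2})$ is a triangular shear of the $z$-coordinate. Since each factor lies in $\BA_3(\C((t)))$, the composition $\sigma_t$ belongs to $\TA_3(\C((t)))$ automatically. The proof then reduces to two direct calculations: first, showing that all negative powers of $t$ cancel upon expansion, so that the coefficients of $\sigma_t$ actually lie in $\C[t]$; second, computing the value $\sigma_0$ and identifying it with $\exp(\lambda\delta)$.

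The mechanism behind the cancellation is the same as in the introductory example: after applying $B_t\circ A_t$ the new $z$-coordinate has the form $Z=z+tF_t$ with $F_t\in\C[t][x,y,z]$, so that every quantity $Z^{k}/t^{k}$ that arises when $C_t$ is applied remains regular at $t=0$; the coefficients in the $x$-shift of $C_t$ are tuned precisely so that the pole parts of $C_t$ collapse in the composition into such regular combinations. The core difficulty---and the reason the hypothesis $n=2m+1$ is needed---is to arrange both cancellations and the identification $\sigma_0=\varphi_{\lambda}$ simultaneously: the $t$-weights of $t^{n}x^{2}$ and $t^{n-1}y^{m+2}$ in $B_t$ must balance the weights of $(z^{m+1}/t)^{2}$ and $(z^{m+1}/t)^{m+2}$ coming from $A_t$ and $C_t$, and this forces precisely $n=2m+1$. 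Once the coefficients are pinned down, the leading term at $t=0$ of $F_t$ is, up to a scalar that may be absorbed into $\lambda$, the Shestakov--Umirbaev polynomial $\Delta=zx+y^{m+1}$, and the binomial expansion of $(z+tF_t)^{k}/t^{k}$ at $t=0$ recovers the explicit formula for $\exp(\lambda\delta)$ displayed in the statement. Granted these two computations, Corollary~\ref{cor-valuatif} yields $\varphi_{\lambda}\in\LL\subset\overline{\TT}$, completing the proof.
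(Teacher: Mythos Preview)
Your overall plan---conjugating a triangular $z$-shear by an $(x,y)$-triangular map with poles in $t$ and checking that the poles cancel---is exactly the paper's. But the specific extrapolation you propose from the $m=1$ example does not work for $m\geq 2$, and the essential cancellation mechanism is missing. The paper does \emph{not} take $y\mapsto y\pm z^{m+1}/t$ and a middle factor involving $y^{m+2}$; it keeps $y\mapsto y\pm z^{2}/t$ for every $m$, uses the middle factor $F_t=(x,y,z+t^{m+2}x^{2}-t^{m+1}y^{2m+1})$, and---crucially---takes the $x$-shift of $G_t$ to be $\tfrac{z^{2m+1}}{t^{m+1}}P_m\!\bigl(\tfrac{ty}{z^{2}}\bigr)$, where $P_m(U)$ is the degree-$m$ truncation of the power series of $(1+U)^{m+1/2}$. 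The entire pole cancellation then reduces to the single identity $P_m(U)^{2}\equiv(1+U)^{2m+1}\pmod{U^{m+1}}$, and \emph{this} is what forces the exponent $2m+1$, not the weight-balancing heuristic you sketch. To see concretely that your shape fails, take $m=2$: with $y'=y+z^{3}/t$ one has $t^{4}(y')^{4}=z^{12}+O(t)$, while $t^{5}(x')^{2}$ cannot contribute at order $t^{0}$ for any $x'$ with integral $t$-poles; hence the new $z$-coordinate specializes to $z-z^{12}\neq z$ at $t=0$ and the limit cannot be $\varphi_\lambda$.

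Two smaller points. Your description of the mechanism (``$Z^{k}/t^{k}$ remains regular'') is not correct: since $Z=z+O(t)$, the quotient $Z^{k}/t^{k}$ is singular; regularity comes instead from \emph{differences} such as $(Z^{2}-z^{2})/t$ between the poles carried by $G_t^{-1}$ and those carried by $G_t$. And even in the paper's construction the limit $\sigma_0$ is not literally $\varphi_\lambda$: only its last two components match (after an affine rescaling $\Psi_\lambda$), and one must then compose with a further tame map of the form $(dx+P(y,z),y,z)$ to obtain $\varphi_\lambda$. For the Corollary as stated a single wild limit suffices, so the $m=1$ instance already displayed in the introduction would be enough---but the two computations you defer (``granted these two computations'') still have to be carried out, and for general $m$ they require the truncated-square-root polynomial $P_m$ that your proposal omits.
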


Before proving our main result, let us make two remarks. First, note that the famous Nagata automorphism
$$N=(x-2y(y^2+zx)-z(y^{2}+zx)^2,y+z(y^2+zx),z)$$
corresponds to the case $n=m=1$ (and $\lambda=1$), which does not satisfy the condition $n=2m+1$. Therefore, the following question naturally shows up.

\begin{question}
Does the  Nagata automorphism belong to the closure of the tame automorphism subgroup of $\C^3$?
\end{question}

Let us also point out that every tame automorphism of $\C^3$ can be easily obtained as a limit of wild automorphisms.

\begin{proposition}\label{prop:wilddense}
The set  $\GA_3(\C)\smallsetminus\TA_3(\C)$ of wild automorphisms of $\C^3$ is dense in $\GA_3(\C)$.
\end{proposition}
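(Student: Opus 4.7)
The plan is to exploit the family $\varphi_\lambda = \exp(\lambda\delta)$ just introduced, which has exactly the two features we need: $\varphi_\lambda$ is wild for every $\lambda \neq 0$ while $\varphi_0 = \mathrm{id}$, and the coefficients of $\varphi_\lambda$ are polynomial in $\lambda$. Given an arbitrary $f \in \GA_3(\C)$, the idea is to translate this family by $f$ so as to produce a curve of automorphisms passing through $f$ which is entirely wild off a single point, and then observe that $f$ lies in its Zariski closure.

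If $f$ is already wild there is nothing to prove, so assume $f \in \TT$. Define $\psi_\lambda := \varphi_\lambda \circ f$ and choose $D \geq 1$ such that $\psi_\lambda \in \GG_{\leq D}$ for every $\lambda \in \A^1_\C$; then $\lambda \mapsto \psi_\lambda$ is a morphism of algebraic varieties $\A^1_\C \to \GG_{\leq D}$. For every $\lambda \neq 0$ the automorphism $\psi_\lambda$ is wild, for otherwise $\varphi_\lambda = \psi_\lambda \circ f^{-1}$ would be the composition of two tame automorphisms, hence tame, contradicting our choice of $\varphi_\lambda$. In particular $\psi_\lambda \neq \psi_0 = f$ whenever $\lambda \neq 0$, so the morphism is non-constant and its image $Y \subset \GG_{\leq D}$ is an irreducible constructible set of positive dimension.

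Consequently $\{\psi_\lambda : \lambda \in \C^*\}$ is Zariski-dense in $\overline{Y}$ (removing one point from a constructible dense subset of an irreducible variety of positive dimension preserves density), and in particular $\psi_0 = f$ belongs to the Zariski closure of $\{\psi_\lambda : \lambda \in \C^*\} \subset \GG_{\leq D} \smallsetminus \TT_{\leq D}$ inside $\GG_{\leq D}$. This exhibits $f$ as a point of the ind-closure of $\GA_3(\C)\smallsetminus\TA_3(\C)$, proving the claimed density. There is no real obstacle in this argument: the only non-trivial input is the existence of a family of wild automorphisms deforming to the identity with bounded degree, and this is already supplied by the $\varphi_\lambda$ construction preceding Theorem~\ref{main-thm}.
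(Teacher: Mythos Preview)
Your proof is correct and follows essentially the same approach as the paper's: both compose the one-parameter family $\varphi_\lambda$ (wild for $\lambda\neq 0$, identity for $\lambda=0$) with a given tame automorphism and observe that the resulting family is wild off $\lambda=0$ and specializes to $f$ at $\lambda=0$. You simply spell out in more detail the topological justification (bounded degree, morphism $\A^1_\C\to\GG_{\leq D}$, density after removing a point), whereas the paper leaves these points implicit.
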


\begin{proof}Let $\sigma$  be a tame automorphism of $\C^3$ and let $\sigma_t$ be the family of automorphisms of $\C^3$ defined by $\sigma_t:=\varphi_t\circ\sigma$ for all $t\in\C$, where $\varphi_t$ is the exponential map of the locally nilpotent derivation $\delta$ described above. Since $\varphi_0$ is simply equal to the identity map, $\sigma_t$ converges to $\sigma$, when $t\to 0$. Thus, $\sigma$ is a limit of wild automorphisms.
\end{proof}

In order to prove Theorem~\ref{main-thm}, we need to introduce some other notations.  For every  positive integer $m\ge 1$, we let
$$P_m(U)=\sum_{k=0}^m\binom{m+\frac{1}{2}}{k}U^k\in\Q[U].$$
Note that the polynomial $P_m(U)$ is equal to the formal power series of $(1+U)^{m+\frac{1}{2}}$ truncated at the order $m$. We consider also the two following triangular (tame) automorphisms of $\A_{\C(t)}^3$.
$$F_t=(x,y,z+t^{m+1}(tx^2-y^{2m+1}))\quad \text{and}\quad G_t=(x+\frac{z^{2m+1}}{t^{m+1}}P_m(\frac{ty}{z^2}),y+\frac{z^2}{t},z).$$
Remark that $G_t$ indeed defines  an automorphism of $\A_{\C(t)}^3$, since $$\frac{z^{2m+1}}{t^{m+1}}P_m(\frac{ty}{z^2})=\sum_{k=0}^m\binom{m+\frac{1}{2}}{k}t^{-(m+1-k)}y^kz^{2m+1-2k}$$
is an element of $\C(t)[y,z]$.  Finally, we set $\sigma_t=G_t^{-1}\circ F_t\circ G_t\in\GA_3(\C(t))$. It turns out that the map $\sigma_t$ has all its coefficients in $\C[t]$. More precisely, we have the following result.

\begin{theorem}\label{thm-wild-limits}
Let $\sigma_t$ be the tame automorphism of $\A_{\C(t)}^3$ defined above. Then all three components  of $\sigma_t$ are elements of $\C[t][x,y,z]$. Moreover, putting $t=0$ in their formulas, we get a wild polynomial automorphism of $\C^3$, whose last two components are $y-4z^{2m+1}(xz-\binom{m+\frac{1}{2}}{m+1}y^{m+1})$ and $z$, respectively.
\end{theorem}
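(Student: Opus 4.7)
The plan is to compute $\sigma_t = G_t^{-1}\circ F_t\circ G_t$ by hand and show that all apparent poles at $t=0$ cancel. The entire construction hinges on the defining property of $P_m$: because it is the order-$m$ Taylor truncation of the binomial series $(1+U)^{m+1/2}$, one has $P_m(U)^2 \equiv (1+U)^{2m+1} \pmod{U^{m+1}}$ in $\Q[U]$, so $P_m(U)^2 = (1+U)^{2m+1} + U^{m+1}R_m(U)$ for some $R_m\in\Q[U]$ of degree at most $m$. Substituting $U=ty/z^2$ and writing $W = z^2+ty = z^2(1+U)$, then multiplying by $z^{4m+2}$, gives
\[
z^{4m+2}P_m(U)^2 \;=\; W^{2m+1} + t^{m+1}\,y^{m+1}z^{2m}R_m(U),
\]
where $z^{2m}R_m(U)\in\C[t,y,z]$ precisely because $\deg R_m\le m$. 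Plugging this into the third coordinate of $F_t\circ G_t$ makes the apparent negative powers of $t$ cancel, yielding $c := z + th$ with
\[
h := t^{m+1}x^2 + 2x\,z^{2m+1}P_m(U) + y^{m+1}z^{2m}R_m(U) \in \C[t][x,y,z].
\]
The third component of $\sigma_t$ is then $c$, and the second equals $y - (c^2-z^2)/t = y - h(c+z) = y - 2zh - th^2 =: y^* \in \C[t][x,y,z]$.

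The real difficulty is the first component of $\sigma_t$, which equals $x + (A-B)/t^{m+1}$ with $A := z^{2m+1}P_m(U)$, $B := c^{2m+1}P_m(V)$, and $V := (tY-c^2)/c^2$ (where $Y$ is the second coordinate of $G_t$). A short computation using $tY = W = c^2 + ty^*$ gives $V = ty^*/c^2$; direct expansion then shows $A,B\in\C[t][x,y,z]$, but one must still prove $t^{m+1} \mid (A-B)$ in that ring. Here the squared identity pays off a second time: since $z^2(1+U) = c^2(1+V) = W$, applying it with argument $U$ and with argument $V$ yields
\[
A^2 - B^2 \;=\; t^{m+1}\big[y^{m+1}z^{2m}R_m(U) - (y^*)^{m+1}c^{2m}R_m(V)\big] \;\in\; t^{m+1}\,\C[t][x,y,z].
\]
To descend from squares to the original, factor $A^2 - B^2 = (A-B)(A+B)$ and observe that $A|_{t=0} = B|_{t=0} = z^{2m+1}$, so $(A+B)|_{t=0} = 2z^{2m+1}\neq 0$, i.e.\ the $t$-adic valuation $v_t(A+B)$ in the integral domain $\C[x,y,z][t]$ vanishes. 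Additivity of $v_t$ then forces $v_t(A-B) \ge m+1$, which is exactly the required divisibility. This descent from squares is the main obstacle I anticipate, since neither $A$ nor $B$ alone enjoys such $t$-divisibility; only their difference does.

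The remaining assertions are routine. For specialization at $t=0$: expanding $(1+U)^{2m+1} = \bigl((1+U)^{m+1/2}\bigr)^2$ yields the convolution identity $\binom{2m+1}{m+1} = \sum_{a+b=m+1}\binom{m+1/2}{a}\binom{m+1/2}{b}$, and since $P_m$ is truncated at order $m$ only the boundary terms $a=0$ and $a=m+1$ drop out of $P_m(U)^2$, giving $R_m(0) = -2\binom{m+1/2}{m+1}$. Hence $h|_{t=0} = 2z^{2m}\bigl(xz - \binom{m+1/2}{m+1}y^{m+1}\bigr)$, so the second and third components of $\sigma_0$ are $y - 4z^{2m+1}\bigl(xz - \binom{m+1/2}{m+1}y^{m+1}\bigr)$ and $z$ as announced. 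Finally, $\sigma_0$ fixes $z$, so by Theorem~\ref{thm:SU} it is wild in $\GA_3(\C)$ if and only if it is not in $\TA_2(\C[z])$; for this one verifies that $\sigma_0$ equals the exponential $\exp(-4\,\delta_c)$ of the Nagata-type locally nilpotent derivation $\delta_c := (zx + cy^{m+1})\bigl(z^n\partial_y - c(m+1)y^mz^{n-1}\partial_x\bigr)$ with $c := -\binom{m+1/2}{m+1}\neq 0$. The linear substitution $y\mapsto\beta y$ with $\beta^{m+1}=c$ conjugates $\delta_c$ to a nonzero scalar multiple of the $\delta$ defined earlier, so $\sigma_0$ is affinely conjugate to some $\varphi_\mu$ with $\mu\neq 0$ and inherits its wildness from \cite{Kuroda-wildness}.
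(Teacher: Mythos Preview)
Your computational core---showing $Z,Y,X\in\C[t][x,y,z]$ via the identity $P_m(U)^2\equiv(1+U)^{2m+1}\pmod{U^{m+1}}$ and then handling the first component by factoring $A^2-B^2=(A-B)(A+B)$ and reading off $v_t(A-B)\ge m+1$ from $v_t(A+B)=0$---is exactly the paper's argument, carried out with the same key steps in the same order. Your introduction of the explicit remainder $R_m$ and the convolution identity for $R_m(0)$ is a nice packaging of what the paper does by truncating at order $m+1$.

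Where you diverge from the paper is in the last paragraph, and there are two soft spots. First, you never argue that $\sigma_0$ is an \emph{automorphism}: you have only shown it is a polynomial endomorphism with the announced second and third components. The paper handles this by extending $F_t,G_t$ to birational self-maps of $\C^4_{t,x,y,z}$ with Jacobian $1$ and invoking that a birational endomorphism of affine space with constant Jacobian is an automorphism. An alternative in your setup: apply your entire argument to $\sigma_t^{-1}=G_t^{-1}F_t^{-1}G_t$ (same shape, since $F_t^{-1}$ just flips a sign) to get $\sigma_t^{-1}\in\End(\A^3_{\C[t]})$, then specialize the identity $\sigma_t\circ\sigma_t^{-1}=\mathrm{id}$ at $t=0$.

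Second, your wildness step rests on the bare assertion that $\sigma_0=\exp(-4\delta_c)$. You have matched the last two components, but you have not computed $X|_{t=0}$, so equality of the first components is unverified. The paper avoids this: it simply notes that $Y|_{t=0}$ is a wild coordinate of $\C[z][x,y]$ (by \cite{EdoVenereau}) and concludes via Theorem~\ref{thm:SU}. If you prefer your route, you can sidestep the missing computation as follows: once you know $\sigma_0$ is an automorphism, the composition $\sigma_0\circ\exp(4\delta_c)$ fixes $y$ and $z$, hence has the form $(\alpha x+p(y,z),y,z)$ with $\alpha\in\C^*$, which is tame; so $\sigma_0$ is wild iff $\exp(-4\delta_c)$ is. (Also, a minor slip: to turn $\Delta_c=zx+cy^{m+1}$ into $zx+y^{m+1}$ you want $\beta^{m+1}=c^{-1}$, not $\beta^{m+1}=c$.)
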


\begin{proof}
Remark that the inverse $G_t^{-1}$ of  $G_t$ is given by
$$G_t^{-1}=(x-\frac{z^{2m+1}}{t^{m+1}}P_m(\frac{ty-z^2}{z^2}),y-\frac{z^2}{t},z).$$
Note that $z^{2m+1}P_m(\frac{ty-z^2}{z^2})\in\C[t][x,y,z]$. In particular, the components of $\sigma_t=G_t^{-1}\circ F_t\circ G_t$ are all elements of $\C[t,t^{-1}][x,y,z]$. Let us denote them by $X:=\sigma_t^*(x)$, $Y:=\sigma_t^*(y)$ and $Z:=\sigma_t^*(z)$, respectively. By construction, it is clear that $\sigma_t$ is a tame automorphism of $\A_{\C(t)}^3$. We will further prove the following assertions.

\begin{enumerate}
\item $X,Y,Z\in\C[t,x,y,z]$.
\item $Z\equiv z\mod(t)$.
\item $Y\equiv y-4z^{2m+1}(xz-\binom{m+\frac{1}{2}}{m+1}y^{m+1})\mod(t)$.
\item $\widetilde{\sigma}:=(X|_{t=0},Y|_{t=0},Z|_{t=0})$ is a wild automorphism of $\C^3$.
\end{enumerate}

Let us first compute $Z$.
Setting $T=\frac{ty}{z^2}$, we get $y+\frac{z^2}{t}=\frac{z^2}{t}(1+T)$, and thus
\begin{align*}
Z & = z+t^{m+1}\left(t\left(x+\frac{z^{2m+1}}{t^{m+1}}P_m(T)\right)^2-\left(\frac{z^2}{t}(1+T)\right)^{2m+1}\right)\\
  & = z+t^{m+2}x^2+2txz^{2m+1}P_m(T)+\frac{z^{4m+2}}{t^m}\left(P_m(T)^2-(1+T)^{2m+1}\right).
\end{align*}
Since $P_m(U)$ is the formal power series of $(1+U)^{m+\frac{1}{2}}$ truncated at the order $m$, we have $P_m(U)^2\equiv(1+U)^{2m+1}$ modulo $(U^{m+1})$.
So, $P_m(T)^2\equiv(1+T)^{2m+1}$ modulo $(t^{m+1})$ and we obtain therefore that $Z=\sigma_t^*(z)\in\C[t,x,y,z]$ and $Z|_{t=0}=z$. This proves Assertion (2).

Now, $Y=\sigma_t^*(y)=y+\frac{z^2}{t}-\frac{Z^2}{t}= y-(Z+z)\frac{Z-z}{t}$.
Since the formal power series of $(1+U)^{m+\frac{1}{2}}$ truncated at the order $m+1$ is equal to $P_m(U)+{m+\frac{1}{2}\choose m+1}U^{m+1}$, we have  $P_m(U)^2\equiv(1+U)^{2m+1}-2{m+\frac{1}{2}\choose m+1}U^{m+1}$.
From this, we deduce that $P_m(T)^2\equiv(1+T)^{2m+1}-2{m+\frac{1}{2}\choose m+1}T^{m+1}$ modulo $(t^{m+2})$, where $T=\frac{ty}{z^2}$. Thus,
\begin{align*}
Z & \equiv  z+2txz^{2m+1}P_m(T)+\frac{z^{4m+2}}{t^m}\left(-2{m+\frac{1}{2}\choose m+1}T^{m+1}\right) & \mod(t^2)\\
 & \equiv  z+2txz^{2m+1}-2{m+\frac{1}{2}\choose m+1}ty^{m+1}z^{2m} &\mod(t^2).
\end{align*}
Therefore, $\sigma_t^*(y)=y-(Z+z)\frac{Z-z}{t}\in\C[t][x,y,z]$ and
$$Y|_{t=0}=y-4z^{2m+1}(xz-\binom{m+\frac{1}{2}}{m+1}y^{m+1}).$$
This proves Assertion (3). For $X=\sigma_t^*(x)$, we have
\begin{align*}
X & =  x+ \frac{z^{2m+1}}{t^{m+1}}P_m(\frac{ty}{z^2})-  \frac{Z^{2m+1}}{t^{m+1}}P_m(\frac{ty+z^2-Z^2}{Z^2})\\
  & =  x+ \frac{1}{t^{m+1}}\left(z^{2m+1}P_m(\frac{ty}{z^2})- Z^{2m+1}P_m(\frac{tY}{Z^2})\right)\\
  & =  x+ \frac{1}{t^{m+1}}\sum_{k=0}^m\binom{m+\frac{1}{2}}{k}t^{k}\left(y^kz^{2m+1-2k}-Y^kZ^{2m+1-2k}\right)
\end{align*}

We will prove that $z^{2m+1}P_m(\frac{ty}{z^2})- Z^{2m+1}P_m(\frac{tY}{Z^2})$, which is an element of $\C[x,y,z,t]$, is congruent to 0 modulo $(t^{m+1})$. It suffices to prove that
$$\left(z^{2m+1}P_m(\frac{ty}{z^2})\right)^2-\left(Z^{2m+1}P_m(\frac{tY}{Z^2})\right)^2\equiv0\mod(t^{m+1})$$
and that
$$z^{2m+1}P_m(\frac{ty}{z^2})+Z^{2m+1}P_m(\frac{tY}{Z^2})\not\equiv0\mod(t) .$$
The second assertion holds since $z^{2m+1}P_m(\frac{ty}{z^2})\equiv z^{2m+1}$ and $Z^{2m+1}P_m(\frac{tY}{Z^2})\equiv Z^{2m+1}\equiv z^{2m+1}$ modulo $(t)$. For the first one, recall that $P_m(U)^2$ is congruent to $(1+U)^{2m+1}$ modulo $(U^{m+1})$. Therefore, we have the following congruences modulo $(t^{m+1})$.
$$\left(z^{2m+1}P_m(\frac{ty}{z^2})\right)^2\equiv (z^2)^{2m+1}(1+\frac{ty}{z^2})^{2m+1}\equiv (z^2+ty)^{2m+1}$$
and
$$\left(Z^{2m+1}P_m(\frac{tY}{Z^2})\right)^2\equiv  (Z^2+tY)^{2m+1}=(z^2+ty)^{2m+1}.$$
This implies the desired result, and so Assertion (1) follows.

It remains to show Assertion (4). To check that $\widetilde{\sigma}$ is a polynomial automorphism of $\C^3$, we can consider the natural extension of $F_t$ and $G_t$ as birational maps of $\C^4_{t,x,y,z}$ fixing the first coordinate. Note that their Jacobian are both equal to 1. Thus,  the endomorphism $\varphi$ of $\C^4_{t,x,y,z}$ defined by $\varphi=(t,X,Y,Z)$ is also of Jacobian 1 and it is a birational map. This implies (see for example Corollary 1.1.34 in \cite{Essen}) that $\varphi$ is a polynomial automorphism of $\C^4$. In particular, $\widetilde{\sigma}$ is an automorphism of $\C^3$. By Assertion (2), $\widetilde{\sigma}$ is an element of $\GA_2(\C[z])$. Finally, Assertion $(3)$ and the main result of \cite{SU2} allow us to conclude that $\widetilde{\sigma}$ is a wild automorphism of $\C^3$, since $Y|_{t=0}$ is a wild coordinate of $\C[z][x,y]$ (see e.g. Proposition 2 in~\cite{EdoVenereau}).
\end{proof}

\begin{example}
For $m=1$, we obtain that
$$\sigma_t=(x-\frac{3yz}{2t}+\frac{z^3}{2t^2},y-\frac{z^2}{t},z)\circ(x,y,z+t^3x^2-t^2y^{3})\circ(x+\frac{3yz}{2t}+\frac{z^3}{t^2},y+\frac{z^2}{t},z)$$
is a tame automorphism of $\A^3_{\C(t)}$ which has all its coefficient in $\C[t]$. Computing explicitly these coefficients and letting $t=0$ in the formulas, we find then the following wild automorphism of $\C^3$, which is the limit when $t\to0$ of the family $(\sigma_t)_{t\neq0}$ of tame automorphisms of $\C^3$.
\begin{align*}\widetilde{\sigma} &= (x+\frac{9}{8}y^3z^2-3xyz^3+\frac{27}{32}y^4z^5-\frac{9}{2}xy^2z^6+6x^2z^7, y+\frac{3}{2}y^2z^3-4xz^4, z)\\
&=\left(x+\frac{3}{4}z^2y(\frac{3}{2}y^2-4xz)+\frac{3}{8}z^5(\frac{3}{2}y^2-4xz)^2, y+z^3(\frac{3}{2}y^2-4xz), z\right).
\end{align*}
\end{example}

Finally, we prove Theorem~\ref{main-thm}.

\begin{proof}(of Theorem~\ref{main-thm}). Let $m\geq1$ be a fixed integer.
For every $\lambda\in\C^*$, let $\Psi_{\lambda}$ be the affine automorphism of $\C^3$ defined by $\Psi_{\lambda}=(ax,by,cz)$, where $a,b,c\in\C^*$ are chosen such that $-\binom{m+\frac{1}{2}}{m+1}b^{m+1}=1$, $-4c^{2m+1}=\lambda b$ and $ac=1$. Then, consider the  automorphism  $\alpha_{t}$ of $\A_{\C(t)}^3$ given by $\alpha_{t}=\Psi_{\lambda}^{-1}\circ\sigma_t\circ\Psi_{\lambda}$, where  $\sigma_{t}$  denotes the automorphism defined before Theorem~\ref{thm-wild-limits}. By Theorem~\ref{thm-wild-limits}, we have that $(\alpha_t)_{t\in\C^*}$ is a family of tame automorphisms of $\C^3$, which converges to a wild automorphism $\alpha$ of $\C^3$, when $t\to0$. Moreover, the two last components of $\alpha$ are equal to $y+\lambda(xz+y^{m+1}) z^{2m+1}$ and $z$, respectively. Note that these two  are also the last components of $\varphi_{\lambda}=\exp(\lambda\delta)$ in the case $n=2m+1$. Therefore, there exists a tame  polynomial automorphism $f$ of $\C^3$ of the form $f=(dx+P(y,z),y,z)$ with $d\in\C^*$ and $P(y,z)\in\C[y,z]$ such that $f\circ\alpha=\varphi_{\lambda}$. Thus, $(f\circ\alpha_t)_{t\neq0}$ is a family of tame automorphisms, which converges to $\varphi_{\lambda}$ for $t\to0$. This proves the theorem.
\end{proof}

\section{Acknowledgements}
We are very grateful to Jean-Philippe Furter and Hanspeter Kraft  for sharing with us preliminary version of their upcoming paper about the geometry of the automorphism group of affine $n$-space, and for fruitful discussions, which helped us to understand some subtleties of the topology of ind-groups.

\begin{bibdiv}
\begin{biblist}

\bib{BCW}{article}{
   author={Bass, Hyman},
   author={Connell, Edwin H.},
   author={Wright, David},
   title={The Jacobian conjecture: reduction of degree and formal expansion
   of the inverse},
   journal={Bull. Amer. Math. Soc. (N.S.)},
   volume={7},
   date={1982},
   number={2},
   pages={287--330},
}

\bib{EdoVenereau}{article}{
   author={Edo, Eric},
   author={V{\'e}n{\'e}reau, St{\'e}phane},
   title={Length 2 variables of $A[x,y]$ and transfer},
   note={Polynomial automorphisms and related topics (Krak\'ow, 1999)},
   journal={Ann. Polon. Math.},
   volume={76},
   date={2001},
   number={1-2},
   pages={67--76},
   issn={0066-2216},
}
\bib{Essen}{book}{
   author={van den Essen, Arno},
   title={Polynomial automorphisms and the Jacobian conjecture},
   series={Progress in Mathematics},
   volume={190},
   publisher={Birkh\"auser Verlag},
   place={Basel},
   date={2000},
   pages={xviii+329},
   isbn={3-7643-6350-9},
}

\bib{Kulk}{article}{
   author={van der Kulk, W.},
   title={On polynomial rings in two variables},
   journal={Nieuw Arch. Wiskunde (3)},
   volume={1},
   date={1953},
   pages={33--41},
}

\bib{JPF}{article}{
   author={Furter, Jean-Philippe},
   title={On the variety of automorphisms of the affine plane},
   journal={J. Algebra},
   volume={195},
   date={1997},
   number={2},
   pages={604--623},
}

\bib{FurterLength}{article}{
   author={Furter, Jean-Philippe},
   title={On the length of polynomial automorphisms of the affine plane},
   journal={Math. Ann.},
   volume={322},
   date={2002},
   number={2},
   pages={401--411},
}

\bib{Furter}{article}{
   author={Furter, Jean-Philippe},
   title={Plane polynomial automorphisms of fixed multidegree},
   journal={Math. Ann.},
   volume={343},
   date={2009},
   number={4},
   pages={901--920},
}

\bib{FK}{article}{
   author={Furter, Jean-Philippe},
   author={Kraft, Hanspeter},
   title={On the geometry of the automorphism group of affine n-space},
   journal={in preparation},
   date={2014},
}

\bib{humphreys}{book}{
   author={Humphreys, James E.},
   title={Linear algebraic groups},
   note={Graduate Texts in Mathematics, No. 21},
   publisher={Springer-Verlag},
   place={New York},
   date={1975},
   pages={xiv+247},
 }

\bib{Jung}{article}{
   author={Jung, Heinrich W. E.},
   title={\"Uber ganze birationale Transformationen der Ebene},
   journal={J. Reine Angew. Math.},
   volume={184},
   date={1942},
   pages={161--174},
   issn={0075-4102},
}
		
\bib{Kuroda}{article}{
   author={Kuroda, Shigeru},
   title={Shestakov-Umirbaev reductions and Nagata's conjecture on a
   polynomial automorphism},
   journal={Tohoku Math. J. (2)},
   volume={62},
   date={2010},
   number={1},
   pages={75--115},
   issn={0040-8735},
}
	
\bib{Kuroda-wildness}{article}{
   author={Kuroda, Shigeru},
   title={Wildness of polynomial automorphisms in three variables},
   journal={preprint arXiv:1110.1466 [math.AC]},
 date={2011},
}


\bib{Sha66}{article}{
   author={Shafarevich, Igor R.},
   title={On some infinite-dimensional groups},
   journal={Rend. Mat. e Appl. (5)},
   volume={25},
   date={1966},
   number={1-2},
   pages={208--212},
}

\bib{Sha81}{article}{
   author={Shafarevich, Igor R.},
   title={On some infinite-dimensional groups. II},
   language={Russian},
   journal={Izv. Akad. Nauk SSSR Ser. Mat.},
   volume={45},
   date={1981},
   number={1},
   pages={214--226, 240},
   issn={0373-2436},
}

\bib{Sha95}{article}{
   author={Shafarevich, Igor R.},
   title={Letter to the editors: ``On some infinite-dimensional groups. II''},
   language={Russian},
   journal={Izv. Ross. Akad. Nauk Ser. Mat.},
   volume={59},
   date={1995},
   number={3},
   pages={224},
   issn={0373-2436},
}

\bib{SU1}{article}{
   author={Shestakov, Ivan P.},
   author={Umirbaev, Ualbai U.},
   title={Poisson brackets and two-generated subalgebras of rings of
   polynomials},
   journal={J. Amer. Math. Soc.},
   volume={17},
   date={2004},
   number={1},
   pages={181--196},
   issn={0894-0347},
}

\bib{SU2}{article}{
   author={Shestakov, Ivan P.},
   author={Umirbaev, Ualbai U.},
   title={The tame and the wild automorphisms of polynomial rings in three
   variables},
   journal={J. Amer. Math. Soc.},
   volume={17},
   date={2004},
   number={1},
   pages={197--227},
   issn={0894-0347},
}

\end{biblist}
\end{bibdiv}

\end{document}